\documentclass[12pt]{amsart}

\usepackage[OT1]{fontenc}
\usepackage{geometry}
\usepackage[psamsfonts]{amssymb}
\usepackage{amsfonts,amsmath,amstext,amsbsy,amsopn,amsthm}
\usepackage{mathtools}
\usepackage{upref,xcolor}
\usepackage{url}
\usepackage{helvet,courier,type1cm}
\usepackage[bottom]{footmisc}
\usepackage[colorlinks,citecolor=red,linkcolor=blue,hyperindex]{hyperref}
\usepackage{bookmark}

\allowdisplaybreaks[2]

\hypersetup{
  pdftitle={Besov spaces and product BMO in arbitrary dimension},
  pdfauthor={Chen Li and Kai Wang},
  pdfsubject={Besov embedding and Fejer approximation of bounded Hankel operators}
}

\newtheorem{theorem}{Theorem}[section]
\newtheorem{lemma}[theorem]{Lemma}
\newtheorem{proposition}[theorem]{Proposition}
\newtheorem{corollary}[theorem]{Corollary}

\theoremstyle{definition}
\newtheorem{question}[theorem]{Question}

\theoremstyle{remark}
\newtheorem{remark}[theorem]{Remark}

\newcommand{\R}{\mathbb{R}}
\newcommand{\C}{\mathbb{C}}
\newcommand{\Z}{\mathbb{Z}}
\newcommand{\D}{\mathcal{D}}
\newcommand{\Qp}{\mathbb{R}_+^n}

\newcommand{\Bp}{\mathcal{B}_p}
\newcommand{\BMO}{\mathrm{BMO}_{\mathrm{prod}}}
\newcommand{\Hprod}{H^1_{\mathrm{prod}}}

\newcommand{\supp}{\mathrm{supp}}
\DeclareMathOperator*{\essinf}{ess\,inf}

\newcommand{\norm}[1]{\left\lVert#1\right\rVert}
\newcommand{\ip}[2]{\langle#1,#2\rangle}
\newcommand{\jj}{\mathbf{j}}
\newcommand{\kk}{\mathbf{k}}
\newcommand{\mm}{\mathbf{m}}
\newcommand{\ind}{\mathbf{1}}

\numberwithin{equation}{section}

\begin{document}

\title[Besov spaces and product BMO]
{Besov spaces and product BMO over polydisc}

\author{Chen Li}
\address{Chen Li: School of Mathematical Sciences, Fudan University,
Shanghai, China}
\email{22110180024@m.fudan.edu.cn}

\author{Kai Wang}
\address{Kai Wang: School of Mathematical Sciences, Fudan University,
Shanghai, China}
\email{kwang@fudan.edu.cn}

\date{October 8, 2026}

\begin{abstract}
We prove that the analytic Besov space
$B_{p,p}^{1/p}(\C_+^n)$ embeds continuously into Chang--Fefferman product
BMO for every finite $n$ and $1\le p<\infty$. Consequently, every
small Hankel operator in a finite Schatten class has a product BMO
symbol. The proof uses smooth product wavelets and an estimate for
dyadic rectangles. We also give
a condition on Fej\'er means that yields a BMO symbol for a bounded
Hankel operator on the polydisc.
\end{abstract}
\keywords{Hankel operators; Schatten classes; Besov spaces;
product BMO; product wavelets; Fej\'er approximation}

\maketitle

\section{Introduction}

For a Hankel operator, one would like to describe boundedness and
compactness through its symbol. On the Hardy space $H^2(\mathbb D)$
of the unit disc, Nehari's theorem \cite{Nehari} says that boundedness
is equivalent to the existence of a $\mathrm{BMOA}$ symbol. For compact
operators, Hartman's theorem \cite{Hartman} replaces 
by $\mathrm{VMOA}$ ones.
 Peller \cite{Peller1,Peller2} proved that, for
$0<p<\infty$, a Hankel operator belongs to $S^p$ if and only if
its analytic symbol belongs to the Besov space $B_{p,p}^{1/p}$.

The corresponding questions for small Hankel operators in several
variables are more difficult. One expects Chang--Fefferman product
BMO and its vanishing subspace VMO to describe boundedness and
compactness. A BMO characterization was claimed on the bidisc by
Ferguson and Lacey \cite{FL}, and in several variables by Lacey and
Terwilleger \cite{LT}. A VMO characterization of compactness on the
bidisc was also stated by Lacey, Terwilleger, and Wick \cite{LTW}.
However, Holmes, Treil, and Volberg \cite[Section 10]{HTV} showed
that a lower-bound estimate used in the boundedness proof fails.
The compactness proof in \cite{LTW} uses that estimate as well.
This leaves a gap in these arguments, rather than a counterexample
to the proposed characterizations. The general Nehari problem
remains open; see \cite{HTV,BPBesov,BPRecover}.

The finite Schatten classes offer a way to study this problem under
a stronger assumption on the operator. Here a Besov description is
already available. Pott and Smith \cite{PS} used the vector-valued
form of Peller's theorem to characterize small Hankel operators in
$S^p$ in several variables when $1<p<\infty$. Bampouras and Perfekt
\cite{BPBesov} proved the characterization for $1\le p<\infty$ on
product Hardy spaces and also treated weighted operators. These
results reduce the question of a BMO symbol for an $S^p$ Hankel
operator to an embedding of its Besov symbol space into product BMO.

Bampouras and Perfekt \cite{BPRecover} proved this embedding in two
variables. Thus the conclusion of Nehari's theorem holds in that
case whenever the operator belongs to some $S^p$, $1\le p<\infty$.
They noted that three or more variables require further proof
because of the geometry of product rectangles. We prove the
embedding in every finite dimension. The argument uses smooth
product wavelets and a sum of powers of the measures of dyadic
rectangles.

Let $\C_+=\{z\in\C:\operatorname{Im}z>0\}$ and
$\Qp=(0,\infty)^n$. We identify $H^2(\C_+^n)$ with its boundary
functions in $L^2(\R^n)$, whose Fourier transforms are supported in
$\overline{\Qp}$. Let $P_+$ be the Fourier projection onto this
positive orthant, and write $\BMO$ for Chang--Fefferman product BMO
on $\R^n$. Its definition is recalled in Section \ref{subsec:wavelets}. A small Hankel operator with symbol
$\varphi$ is given, initially on a suitable test class, by
\begin{equation}\label{eq:hankel}
 \widehat{H_\varphi u}(\xi)
 =\int_{\Qp}\widehat\varphi(\xi+\lambda)\widehat u(\lambda)\,d\lambda,
 \qquad \xi\in\Qp.
\end{equation}
\begin{samepage}
Write $\Bp=B_{p,p}^{1/p}(\C_+^n)$ for the analytic Besov
space. Our main result is the following embedding.

\begin{theorem}\label{thm:embedding}
Let $n\ge1$ and $1\le p<\infty$. There is a continuous linear embedding
\[
 \Bp\longrightarrow P_+\BMO
\]
with
\begin{equation}\label{eq:mainembedding}
 \norm f_{\BMO}\le_{n,p}\norm f_{\Bp}.
\end{equation}
\end{theorem}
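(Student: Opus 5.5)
We will prove the estimate through the wavelet characterization of both spaces. We fix a smooth product wavelet basis $\psi_R=\psi_{I_1}\otimes\cdots\otimes\psi_{I_n}$ indexed by dyadic rectangles $R=I_1\times\cdots\times I_n$, with each $\psi_{I}$ an $L^2$-normalized Meyer (or compactly supported, sufficiently smooth) wavelet. We then use the Chang--Fefferman characterization recalled in Section~\ref{subsec:wavelets}:
\[
 \norm f_{\BMO}^2\simeq\sup_{\Omega}\frac{1}{|\Omega|}\sum_{R\subset\Omega}|\ip{f}{\psi_R}|^2,
\]
where the supremum is over open sets $\Omega\subset\R^n$ of finite measure. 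On the Besov side, the product structure of $B_{p,p}^{1/p}$ together with the tensor wavelet characterization (componentwise, a function is in $B^{1/p}_{p,p}$ iff $\sum_I |I|^{-1}|I|^{p/2}\cdots$) gives
\[
 \norm f_{\Bp}^p\simeq\sum_R |R|^{-p/2}|\ip{f}{\psi_R}|^p,
\]
since the normalized coefficient $a_R=|R|^{-1/2}\ip{f}{\psi_R}$ has weight $|R|^{1/p\cdot p}\,|R|^{-1}=1$ in each variable. We will verify this for the analytic space using that $P_+$ acts diagonally (up to harmless modifications) on Meyer wavelets with frequency supports in a single orthant, or alternatively by restricting the Littlewood--Paley description to $\overline{\Qp}$. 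This gives the reduction: it suffices to show, for $a_R=|R|^{-1/2}|\ip f{\psi_R}|$,
\[
 \sum_{R\subset\Omega}|R|\,a_R^2\;\lesssim\;|\Omega|\Big(\sum_R a_R^p\Big)^{2/p}.
\]

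The case $p\le 2$ is immediate. Indeed $a_R\le \norm a_{\ell^p}$, so $a_R^2\le a_R^p\norm a_{\ell^p}^{2-p}$, but we need the factor $|R|$ controlled by $|\Omega|$. Since $R\subset\Omega$ gives $|R|\le|\Omega|$, we get $\sum_{R\subset\Omega}|R|a_R^2\le|\Omega|\norm a_{\ell^2}^2\le|\Omega|\norm a_{\ell^p}^2$. Note that $|R|\le|\Omega|$ is a crude bound.

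For $2<p<\infty$ we apply H\"older with exponents $p/2$ and $q=p/(p-2)$:
\[
 \sum_{R\subset\Omega}|R|a_R^2\le\Big(\sum_R a_R^p\Big)^{2/p}\Big(\sum_{R\subset\Omega}|R|^{q}\Big)^{1/q}.
\]
The key geometric estimate, and the main obstacle, is therefore
\[
 \sum_{R\subset\Omega,\ R\ \text{dyadic}}|R|^{s}\le C_{n,s}|\Omega|^{s}\qquad(s>1).
\]
This is the ``sum of powers of measures of dyadic rectangles'' mentioned in the introduction; in dimension $n\ge3$ one cannot rely on Journ\'e-type covering arguments. We plan to prove it by writing $|R|^s=|R|\cdot|R|^{s-1}$ and grouping rectangles by shape $(k_1,\dots,k_n)$ with side lengths $2^{-k_i}$. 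For fixed shape, the rectangles in $\Omega$ are disjoint, so their total measure is at most $|\Omega|$. With $|R|=2^{-\sum k_i}\le|\Omega|$, the contribution of that shape is at most $|\Omega|\,(2^{-\sum k_i})^{s-1}$. Summing over all integer shapes with $\sum k_i\ge m_0:=\lceil-\log_2|\Omega|\rceil$ diverges, however, because there are infinitely many shapes with a given $\sum k_i$ (very elongated rectangles). We must also use that for each shape the total measure is at most $|\{M_{\text{shape}}\}|$, which is bounded by $|\Omega\cap\ldots|$. The fix is as follows. A rectangle of shape $k$ lying in $\Omega$ forces $\Omega$ to contain, for each direction $i$, long segments. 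Let $E_k=\bigcup\{R\subset\Omega$ of shape $k\}$. Then $E_k\subset E_{k'}$ whenever $k'\ge k$ coordinatewise, and $E_k$ is contained in $\{M_s\ind_\Omega\ge 2^{-j}\}$ for shapes lying $j$ steps beyond an adjacent one. We then bound $|E_k|$ via the strong maximal function iterated, using $|\{M_S\ind_\Omega>\lambda\}|\lesssim|\Omega|\lambda^{-1}(1+\log\lambda^{-1})^{n-1}$. Concretely, for $k$ with $\sum k_i=m_0+\ell$ and any shape $k$ we compare $E_k$ to enlargements. Our first attempt is induction on $n$ via Fubini: fix the first coordinate's dyadic interval and apply the $(n-1)$-dimensional estimate on slices, with the exponent $s>1$ absorbing the polynomial (logarithmic) losses from counting shapes. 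We expect the resulting geometric series in $\ell$, weighted by $2^{-\ell(s-1)}$ times polynomial-in-$\ell$ shape counts, to converge. This yields \eqref{eq:mainembedding}. Finally, linearity and injectivity, together with the fact that the image lies in $P_+\BMO$, follow because the wavelet expansion of $f\in\Bp$ converges in $\BMO$ modulo constants and $f$ has Fourier support in $\overline{\Qp}$.
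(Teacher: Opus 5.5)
Your reduction matches the paper's. It has three parts: the one-sided coefficient bound $\sum_R|R|^{-p/2}|\ip f{w_R}|^p\lesssim\norm f_{\Bp}^p$, the trivial case $p\le2$, and H\"older with exponents $p/2$ and $s=p/(p-2)$. Only the one direction of the coefficient bound is needed. It follows from frequency localization plus H\"older against $2^{\sigma(\kk)}|W(2^\kk x-\mm)|\,dx$. Note that $P_+$ does not act diagonally on real Meyer wavelets, so rely on your Littlewood--Paley alternative there. After H\"older, what remains is $\sum_{R\in\D^n,\,R\subset\Omega}|R|^s\lesssim_{n,s}|\Omega|^s$.

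\textbf{The gap.} This packing estimate carries the whole argument for $p>2$, and it is never proved. You offer two plans (``we expect''), and neither closes as described.

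\emph{The maximal-function plan.} Bounding $|E_\kk|$ through level sets of $M_s\ind_\Omega$ can only give something larger than the trivial bound $|E_\kk|\le|\Omega|$. What must be controlled is how many shapes of a given area a point belongs to, and the maximal function controls this only exponentially. Take two dyadic rectangles of area $2^{-m}$ in $\Omega$ through a common point, whose smallest enclosing dyadic rectangle has area $2^{d-m}$. They force only $M_s\ind_\Omega\ge2^{-d}$ at that point. That level set is in general bounded only by $C2^d(1+d)^{n-1}|\Omega|$, and the factor $2^{-m(s-1)}$ cannot absorb this loss. The polynomial count you hope for is in fact true (it follows from the lemma below), but nothing in your sketch reaches it.

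\emph{The Fubini plan.} Fix the first side $J$ and apply the $(n-1)$-dimensional set estimate to $\Omega_J=\{x':J\times\{x'\}\subset\Omega\}$. This leaves $\sum_{J\in\D}|J|^s|\Omega_J|^s$, and all you know is $|\Omega_J|\le\inf_{t\in J}G(t)$ with $G(t)=|\{x':(t,x')\in\Omega\}|$. That is a one-dimensional inequality for a general nonnegative function, not an instance of the set estimate. Counting shapes or absorbing logarithms does not address it.

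\textbf{The missing idea} is the functional form of the estimate:
\[
 \sum_{R\in\D^n}|R|^s(\essinf_Rg)^s\le A_s^n\norm g_1^s,
 \qquad 0\le g\in L^1(\R^n),\quad A_s=s/(1-2^{1-s}).
\]
For $n=1$, use the layer-cake formula. For fixed $t>0$, the intervals with $\essinf_Ig>t$ lie in $E_t=\{g>t\}$ and group under disjoint maximal dyadic intervals. By the geometric series $\sum_k2^k2^{-ks}$ they contribute at most $(1-2^{1-s})^{-1}|E_t|^s$. Chebyshev's inequality $t|E_t|\le\norm g_1$, together with $\int_0^\infty|E_t|\,dt=\norm g_1$, then gives $\int_0^\infty t^{s-1}|E_t|^s\,dt\le\norm g_1^s$.

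For the induction step, write $R=J\times R'$. Then $\essinf_{J\times R'}g=\essinf_{R'}h_J$ with $h_J(x')=\essinf_{t\in J}g(t,x')$, and $\norm{h_J}_1\le\essinf_JG$ where $G(t)=\int g(t,x')\,dx'$. Apply the $(n-1)$-dimensional case to $h_J$ and the one-dimensional case to $G$, and take $g=\ind_\Omega$ at the end. No logarithmic losses or maximal functions enter. Without this lemma, or a genuine substitute, your argument proves the theorem only for $1\le p\le2$.
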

\end{samepage}

The Schatten--Besov characterization in
\cite[Theorem 6.2(2)]{BPBesov} gives
$\norm\varphi_{\Bp}\asymp_{n,p}\norm{H_\varphi}_{S^p}$.
Theorem \ref{thm:embedding} therefore has the following consequence.

\begin{corollary}\label{cor:hankel}
Let $n\ge1$ and $1\le p<\infty$. If
$H_\varphi\in S^p(H^2(\C_+^n))$, then its analytic symbol $\varphi\in\BMO$, and
\begin{equation}\label{eq:mainhankel}
 \norm\varphi_{\BMO}\le_{n,p}\norm{H_\varphi}_{S^p}.
\end{equation}
\end{corollary}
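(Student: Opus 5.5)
The corollary follows by composing two estimates: the Schatten--Besov characterization of \cite[Theorem 6.2(2)]{BPBesov} and the embedding of Theorem \ref{thm:embedding}. The plan is as follows.

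Let $H_\varphi\in S^p(H^2(\C_+^n))$ with $1\le p<\infty$. Here $\varphi$ is the analytic symbol, so $\widehat\varphi$ is supported in $\overline{\Qp}$ and $H_\varphi$ is given by \eqref{eq:hankel}. By \cite[Theorem 6.2(2)]{BPBesov}, $\varphi\in\Bp$ and $\norm\varphi_{\Bp}\le_{n,p}\norm{H_\varphi}_{S^p}$. Only this direction of the equivalence $\norm\varphi_{\Bp}\asymp_{n,p}\norm{H_\varphi}_{S^p}$ is needed.

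Next apply Theorem \ref{thm:embedding} to $f=\varphi$. This gives $\varphi\in P_+\BMO\subset\BMO$ and $\norm\varphi_{\BMO}\le_{n,p}\norm\varphi_{\Bp}$. Chaining the two inequalities yields \eqref{eq:mainhankel}.

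The only step that needs care is checking that the two results are stated in the same setting, so that they can be composed. Three points should be verified.

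\emph{Hankel operator.} The operator in \cite{BPBesov} should be the same as \eqref{eq:hankel} on the product Hardy space of $\C_+^n$. This means the same Fourier-side formula, the same test class, and the same notion of analytic symbol.

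\emph{Besov space.} The space $B_{p,p}^{1/p}(\C_+^n)$ in \cite{BPBesov} should coincide with $\Bp$ here, with equivalent norms. Any discrepancy, such as a different Littlewood--Paley normalization or a different treatment of polynomials and constants, affects only the constants $\le_{n,p}$.

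\emph{Elements of $\BMO$.} The element of $\BMO$ produced by the embedding should be identified with the symbol $\varphi$ itself, modulo the constants or the appropriate null space of product BMO. This is what makes "$\varphi\in\BMO$" meaningful as stated.

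If these identifications hold, no new analysis is required.
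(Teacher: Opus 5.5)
Your proposal is correct and is the paper's own argument: the corollary follows by chaining the direction $\norm\varphi_{\Bp}\lesssim_{n,p}\norm{H_\varphi}_{S^p}$ of \cite[Theorem 6.2(2)]{BPBesov} with the embedding of Theorem \ref{thm:embedding}. The compatibility points you flag (the same Hankel operator, the same Besov norm up to constants, and identifying the BMO element with $\varphi$) are what the paper assumes without comment when it restates the Bampouras--Perfekt result in its own notation as \eqref{eq:schattenbesov}.
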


We use the following estimate for dyadic rectangles.

\begin{proposition}\label{prop:intropacking}
Let $n\ge1$ and $s>1$, and let $\D^n$ denote the family of dyadic
rectangles in $\R^n$. For every open set $\Omega\subset\R^n$ of
finite measure,
\begin{equation}\label{eq:intropacking}
 \sum_{\substack{R\in\D^n\\R\subset\Omega}}|R|^s
 \lesssim_{n,s}|\Omega|^s.
\end{equation}
\end{proposition}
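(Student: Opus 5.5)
The plan is to induct on the dimension $n$. The inductive step peels off one coordinate and closes by Minkowski's inequality in $\ell^s$, which is where $s\ge 1$ is used. The hypothesis $s>1$ enters only in the one-dimensional base case, to sum a geometric series over scales. Throughout, write dyadic rectangles as $R=I\times R'$ with $I\in\D^1$ and $R'\in\D^{n-1}$, and write $|\cdot|$ for Lebesgue measure in the relevant dimension.

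\emph{Base case $n=1$.} An open set $\Omega\subset\R$ of finite measure is a disjoint union of open intervals $J$ with $\sum_J|J|=|\Omega|$. A dyadic interval $I\subset\Omega$ is connected, so it lies in exactly one component $J$. For a fixed scale $2^{-j}\le|J|$, the dyadic intervals of length $2^{-j}$ inside $J$ are disjoint, so there are at most $|J|2^{j}$ of them. Their contribution is therefore at most $|J|\,2^{-j(s-1)}$. Summing this geometric series over $2^{-j}\le|J|$ (here $s>1$ is needed) gives
\[
 \sum_{I\subset J}|I|^s\lesssim_s|J|^s .
\]
Since $s\ge1$, we have $\sum_J|J|^s\le(\sum_J|J|)^s=|\Omega|^s$, which proves \eqref{eq:intropacking} for $n=1$.

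\emph{Inductive step.} Assume \eqref{eq:intropacking} in dimension $n-1$. For $y\in\R^{n-1}$ let $\Omega^y=\{t\in\R:(t,y)\in\Omega\}$; this is an open subset of $\R$, and $\int|\Omega^y|\,dy=|\Omega|$. For a dyadic interval $I$ let $\Omega_I=\{y:I\times\{y\}\subset\Omega\}$. Then $I\times R'\subset\Omega$ if and only if $R'\subset\Omega_I$. One must check that $\Omega_I$ is measurable and that the $(n-1)$-dimensional statement applies to it. Since $\Omega_I$ need not be open, the cleanest fix is probably to prove the proposition for all measurable $\Omega$ from the start, or to replace $I$ by compact subintervals and pass to a monotone limit. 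I regard this as bookkeeping. With it in place, the induction hypothesis gives
\[
 \sum_{R\subset\Omega}|R|^s=\sum_{I}|I|^s\sum_{R'\subset\Omega_I}|R'|^s
 \lesssim\sum_I\bigl(|I|\,|\Omega_I|\bigr)^s .
\]

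\emph{Key step: Minkowski's inequality.} Write
\[
 |I|\,|\Omega_I|=\int a_I(y)\,dy,\qquad a_I(y)=|I|\,\ind_{\{I\subset\Omega^y\}}.
\]
Minkowski's integral inequality for the $\ell^s$ norm over the index $I$ (valid since $s\ge1$) gives
\[
 \Bigl(\sum_I\Bigl(\int a_I\,dy\Bigr)^s\Bigr)^{1/s}\le\int\Bigl(\sum_{I\subset\Omega^y}|I|^s\Bigr)^{1/s}dy .
\]
By the one-dimensional case applied to each fiber $\Omega^y$, the inner sum is $\lesssim_s|\Omega^y|^s$. The right-hand side is therefore $\lesssim\int|\Omega^y|\,dy=|\Omega|$, which yields \eqref{eq:intropacking} in dimension $n$ with a constant depending on $n$ and $s$.

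The main obstacle, as I see it, is finding this reduction. Counting rectangles of a fixed volume directly fails, because infinitely many shapes share the same volume. Slicing off one coordinate and using Minkowski's inequality avoids this shape counting altogether. Once that is in place, what remains is the measurability and openness bookkeeping for $\Omega_I$ in the inductive step.
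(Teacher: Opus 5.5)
Your proof is correct, and it differs from the paper's in the key step. Both arguments induct on dimension by splitting $R=I\times R'$ and applying the $(n-1)$-dimensional bound to the inner sum, which leaves $\sum_I|I|^s|\Omega_I|^s$ to control.

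The paper handles this sum by proving the stronger functional inequality of Lemma \ref{lem:tensor}, namely $\sum_R|R|^s(\essinf_R g)^s\le A_s^n\norm g_1^s$ for $0\le g\in L^1$. There the fiber measure $|\Omega_I|$ is bounded by $\essinf_{t\in I}G(t)$, where $G$ is the slice-measure function. The resulting weighted one-dimensional sum is then controlled by Lemma \ref{lem:one}, which uses a layer-cake formula and the weak-type bound $t|\{g>t\}|\le\norm g_1$.

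You instead write $|I|\,|\Omega_I|=\int a_I\,dy$ and use Minkowski's inequality in $\ell^s$ to move the sum over $I$ inside the integral. This way you only need the unweighted one-dimensional estimate on each open fiber $\Omega^y$. It keeps the induction within sets and removes the need for a weighted lemma. It also gives the constant $(1-2^{1-s})^{-n}$, which is sharp: it matches the lower bound in Remark \ref{rem:endpoint} (apply that remark to $(-\varepsilon,1)^n$ and let $\varepsilon\to0$). The paper's $A_s^n$ carries an extra factor $s^n$. In return, the paper's version gives a functional inequality for arbitrary $g\in L^1$ and measurable $\Omega$, with containment taken almost everywhere. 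That formulation avoids the bookkeeping you defer.

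On that bookkeeping, use your second fix. For $I=[a,b)$ one has $\Omega_I=\bigcap_k\Omega_{[a,b-1/k]}$. This is a decreasing intersection of sets that are open by compactness and have finite measure once $b-1/k>a$. Hence $\Omega_I$ is measurable, and the open case plus continuity from above gives $\sum_{R'\subset\Omega_I}|R'|^s\lesssim|\Omega_I|^s$.

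Your first fix is less innocent than it looks. For merely measurable $\Omega$, the set $\{y:I\times\{y\}\subset\Omega\}$ can fail to be measurable. For example, take $N\subset(0,1)$ non-measurable and delete $\{1/2\}\times((0,1)\setminus N)$ from $[0,1)\times(0,1)$. With $I=[0,1)$ this gives $\Omega_I=N$. So that route would require passing to almost-everywhere containment, as the paper does.
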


We first estimate the $\ell^p$ norm of the normalized wavelet
coefficients by the Besov norm. When $p>2$, H\"older's inequality
reduces the BMO estimate to Proposition \ref{prop:intropacking}
with $s=p/(p-2)$. When $1\le p\le2$, the inclusion
$\ell^p\subset\ell^2$ gives the estimate directly.
Section \ref{subsec:obstruction} gives a counterexample to the
direct higher-dimensional form of the one-coordinate covering
estimate used in \cite{BPRecover}.Section 3 and 4 is devoted to the proof of main theorem. Finally,
Section \ref{sec:fejer} considers Fej\'er means of symbols on the
polydisc. It explains which further estimates would allow the
BMOA and VMOA conclusions for polynomial symbols to pass to
bounded and compact Hankel operators.

\section{Preliminaries}

\subsection{Besov spaces and Schatten Hankel operators}
\label{subsec:besov}

We write $|E|$ for the Lebesgue measure of a set $E$ and
$\ind_E$ for its indicator function. The notation $\norm f_p$ means
the $L^p$ norm, and $\norm{(b_j)}_{\ell^p}=(\sum_j|b_j|^p)^{1/p}$.
We use $A\lesssim_{n,p}B$ when $A\le C_{n,p}B$ for a constant
depending only on the indicated parameters; $A\asymp_{n,p}B$ means
that both inequalities hold. The fixed cutoffs and wavelets may
also enter these constants.

For a compact operator $T$, let $s_j(T)$ be its singular values.
The Schatten class $S^p$ consists of those $T$ for which
$\norm T_{S^p}=(\sum_j s_j(T)^p)^{1/p}<\infty$.
We denote the operator norm by $\norm T_{\mathrm{op}}$.

We use the Fourier transform
$\widehat f(\xi)=\int_{\R^n}f(x)e^{-2\pi i x\cdot\xi}\,dx$
and the pairing $\ip f g=\int f\overline g$, linear in the first
variable. For $\jj=(j_1,\ldots,j_n)\in\Z^n$, put
$\sigma(\jj)=j_1+\cdots+j_n$.
Choose a real smooth function $\psi$, compactly supported in
$(0,\infty)$, with
\[
 \supp\psi\subset[1/2,2],
 \qquad \sum_{j\in\Z}\psi(2^{-j}\xi)=1\quad(\xi>0).
\]
For $\xi=(\xi_1,\ldots,\xi_n)$, set
\[
 \Psi_\jj(\xi)=\prod_{\ell=1}^n\psi(2^{-j_\ell}\xi_\ell),
 \qquad
 \Delta_\jj f=\mathcal F^{-1}(\Psi_\jj\widehat f).
\]
The Besov norm is defined as
\begin{equation}\label{eq:besov}
 \norm f_{\Bp}^p
 =\sum_{\jj\in\Z^n}2^{\sigma(\jj)}
      \norm{\Delta_\jj f}_p^p.
\end{equation}
Here $\mathcal F^{-1}$ is the inverse Fourier transform, and
$j_\ell$ records the frequency scale in the $\ell$th coordinate.
The space $\Bp$ consists of boundary functions of analytic functions
on $\C_+^n$ for which \eqref{eq:besov} is finite. Each frequency point belongs to only finitely many
blocks, and the convolution kernels of $\Delta_\jj$ have uniformly
bounded $L^1$ norms.

In this notation, the characterization of Bampouras and Perfekt
\cite[Theorem 6.2(2)]{BPBesov} states that, for $1\le p<\infty$,
\begin{equation}\label{eq:schattenbesov}
 H_\varphi\in S^p(H^2(\C_+^n))
 \quad\Longleftrightarrow\quad
 \varphi\in\Bp,
 \qquad
 \norm\varphi_{\Bp}\asymp_{n,p}\norm{H_\varphi}_{S^p}.
\end{equation}

\subsection{Smooth wavelets and product BMO}\label{subsec:wavelets}

Let
\[
 \D=\{2^{-k}[m,m+1):k,m\in\Z\},
 \qquad \D^n=\{I_1\times\cdots\times I_n:I_\ell\in\D\}.
\]
Fix a real Meyer wavelet $w$, normalized in $L^2(\R)$, with
\[
 w\in\mathcal S(\R),\qquad
 \supp\widehat w\subset\{\xi:a\le|\xi|\le b\},
 \qquad 0<a<b<\infty.
\]
Here $\mathcal S(\R)$ is the Schwartz class, which resembles the function class that all
its derivatives decay faster than every negative power of $|x|$.
Write $W(x)=\prod_{\ell=1}^nw(x_\ell)$ and, for
$\kk,\mm\in\Z^n$, set
\[
 w_{\kk,\mm}(x)
 =2^{\sigma(\kk)/2}W(2^\kk x-\mm),
 \qquad
 R_{\kk,\mm}
 =\prod_{\ell=1}^n2^{-k_\ell}[m_\ell,m_\ell+1),
\]
with
$2^\kk x=(2^{k_1}x_1,\ldots,2^{k_n}x_n)$.
These functions form an orthonormal basis of $L^2(\R^n)$.
We write $w_R=w_{\kk,\mm}$ when $R=R_{\kk,\mm}$ for abbreviation. Thus,
$|R|=2^{-\sigma(\kk)}$.

Chang--Fefferman product BMO is the dual of the real product Hardy
space $\Hprod$. Thus a BMO function $f$ acts by
$g\mapsto\ip f g$, bounded with respect to the $\Hprod$ norm.
We use the usual identification of functions in these homogeneous
spaces. The smooth-wavelet characterization of BMO is
\begin{equation}\label{eq:carleson}
 \norm f_{\BMO}\asymp_n
 \sup_{\substack{\Omega\subset\R^n\ {\rm open}\\0<|\Omega|<\infty}}
 \left(\frac1{|\Omega|}
       \sum_{\substack{R\in\D^n\\R\subset\Omega}}
          |\ip f{w_R}|^2\right)^{1/2}.
\end{equation}
Conversely, a sequence $(c_R)$ satisfying the bound on the right
is the wavelet coefficient sequence of a BMO function, with its norm
controlled by this bound.
See
\cite[Sections 2.1--2.2, formula (2.4) and Theorem 2.5]{DP},
with $n$ one-dimensional factors.

The analytic projection
\[
 P_+=\prod_{\ell=1}^n\frac{I+i\mathcal H_\ell}{2}
\]
where $\mathcal H_\ell$ is the partial Hilbert transform, is bounded
on $\Hprod$ and on $\BMO$. On $L^2$ it is the self-adjoint Fourier
projection to $\Qp$.

\subsection{The fail of covering estimate}
\label{subsec:obstruction}

Let $\Omega\subset\R^n$ be open and have finite measure. A dyadic
rectangle $R=I_1\times\cdots\times I_n\subset\Omega$ is maximal in
the $r$th coordinate if
\[
 I_r\subset J,\qquad
 I_1\times\cdots\times I_{r-1}\times J\times I_{r+1}
       \times\cdots\times I_n\subset\Omega,\qquad J\in\D,
\]
imply $J=I_r$. Thus the other sides are kept fixed when testing
maximality. We denote this family by $\mathcal M_r(\Omega)$.

Put
\[
 \widetilde\Omega=\{M_s\ind_\Omega>1/2\},
\]
where the uncentered strong maximal operator is defined by
\[
 M_sh(x)=\sup_{\substack{Q\ni x\\Q\text{ axis-parallel rectangle}}}
             \frac1{|Q|}\int_Q|h(y)|\,dy.
\]
The rectangles in this supremum need not be dyadic.
For $R=I_1\times\cdots\times I_n\subset\Omega$, let $\widehat I_1$
be the largest dyadic ancestor of $I_1$ such that
$\widehat I_1\times I_2\times\cdots\times I_n\subset\widetilde\Omega$,
and set $\gamma_1(R)=|\widehat I_1|/|I_1|$.
Here $\Omega\subset\widetilde\Omega$ and
$|\widetilde\Omega|\lesssim_n|\Omega|$, so such a largest ancestor
exists. The number $\gamma_1(R)$ measures how far the first side
can be enlarged inside $\widetilde\Omega$.

In two dimensions, the form of Journ\'e's lemma used in
\cite[Lemma 2.2]{BPRecover} is
\begin{equation}\label{eq:journe2}
 \sum_{R\in\mathcal M_2(\Omega)}
       \gamma_1(R)^{-\delta}|R|
 \lesssim_\delta|\Omega|,\qquad \delta>0.
\end{equation}
Replacing $\mathcal M_2$ by $\mathcal M_n$ gives a false estimate
when $n\ge3$.

\begin{proposition}\label{prop:failure}
Let $n\ge3$ and $\delta>0$. There is no constant $C_{n,\delta}$ such that
\begin{equation}\label{eq:false}
 \sum_{R\in\mathcal M_n(\Omega)}
       \gamma_1(R)^{-\delta}|R|
 \le C_{n,\delta}|\Omega|
\end{equation}
for every finite-measure open set $\Omega\subset\R^n$.
\end{proposition}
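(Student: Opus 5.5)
The plan is to show that, for $n\ge3$, the left-hand side of \eqref{eq:false} is already infinite for the unit cube. The reason this can happen is that the weight $\gamma_1(R)^{-\delta}$ only penalises rectangles whose \emph{first} side can be enlarged a lot inside $\widetilde\Omega$. Maximality is imposed only in the $n$th coordinate. When $n\ge3$ there is a third, intermediate coordinate (say the second) that is constrained by neither condition. Shrinking rectangles in that coordinate produces infinitely many admissible rectangles, each with weight comparable to $1$.

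Concretely, take $\Omega=(0,1)^n$ (or $[0,1)^n$; openness does not matter for the count, since we only use rectangles whose closures stay inside a slightly larger open cube, or we can shift to an open cube containing $[0,1)^n$ of measure $\le 2$). For each $j\ge0$ and each dyadic interval $I\subset[0,1)$ with $|I|=2^{-j}$, consider
\[
 R_I=[0,1)\times I\times[0,1)^{n-2}.
\]
Each $R_I\subset\Omega$. Next, $R_I\in\mathcal M_n(\Omega)$: the only dyadic $J\supsetneq[0,1)$ are $[0,2^k)$ with $k\ge1$, and replacing the $n$th side by such $J$ leaves $\Omega$. Here we use $n\ge3$, so that the $n$th side differs from the varied second side.

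Then I bound $\gamma_1(R_I)$ uniformly. Since $\ind_\Omega(y)=\prod_\ell\ind_{[0,1)}(y_\ell)$, the strong maximal function satisfies $M_s\ind_\Omega(x)\le M\ind_{[0,1)}(x_1)$, where $M$ is the one-dimensional maximal operator. If $x_1\notin(-1,2)$, any interval containing $x_1$ that meets $[0,1)$ has length greater than $2\,|[0,1)\cap\text{interval}|$, so $M\ind_{[0,1)}(x_1)\le 1/2$. Hence $\widetilde\Omega\subset(-1,2)\times\R^{n-1}$. The dyadic ancestors of $[0,1)$ are the intervals $[0,2^k)$, and these lie in $(-1,2)$ only for $k\le1$. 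Therefore $|\widehat I_1|\le2$ and $\gamma_1(R_I)\le2$.

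Consequently
\[
 \sum_{R\in\mathcal M_n(\Omega)}\gamma_1(R)^{-\delta}|R|
 \ge 2^{-\delta}\sum_{j\ge0}\ \sum_{\substack{I\in\D,\ I\subset[0,1)\\|I|=2^{-j}}}2^{-j}
 =2^{-\delta}\sum_{j\ge0}1=\infty,
\]
while $|\Omega|=1$, so no constant $C_{n,\delta}$ can exist. If one prefers a finite violation, truncate to $j\le N$ to get a lower bound $2^{-\delta}(N+1)$, and let $N\to\infty$.

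There is no real obstacle. The only points to check are the containment $\widetilde\Omega\subset(-1,2)\times\R^{n-1}$, which follows from the product bound for $M_s$ of a product indicator, and that the argument genuinely needs $n\ge3$. For $n=2$ the varied coordinate would coincide with the maximal one, which is exactly why \eqref{eq:journe2} survives.
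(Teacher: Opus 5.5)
Your construction is the paper's: keep unit sides in every coordinate except the second, subdivide the second side dyadically so that each scale contributes total measure one, obtain maximality in the $n$th coordinate from the parent $[0,2)$ of the last side not fitting, and bound $\gamma_1$ uniformly via $M_s\ind_{A^n}(x)\le M\ind_A(x_1)$.

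The one step to repair is the choice of $\Omega$. With $\Omega=(0,1)^n$ the claim $R_I\subset\Omega$ is false, since $[0,1)\not\subset(0,1)$. Also, $[0,1)^n$ is not an admissible open set. So openness does matter here, and the parenthetical cannot be waved through. You need to commit to an open cube slightly larger than $[0,1]^n$, for instance $\Omega=A^n$ with $A=(-1/10,11/10)$, as the paper does.

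Maximality is unaffected by this change, because $[0,2)\not\subset A$. The containment $\widetilde\Omega\subset(-1,2)\times\R^{n-1}$, however, was computed for the wrong set and must be redone. Now $\{M\ind_A>1/2\}=(-13/10,23/10)$. This interval still contains no ancestor $[0,2^k)$ with $k\ge2$, so $\gamma_1(R_I)\le2$, and your divergence argument then goes through verbatim.

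Alternatively, you can keep $\Omega=(0,1)^n$ and use the rectangles $[1/2,1)\times I\times[1/2,1)^{n-2}$ with $I\subset[1/2,1)$ dyadic. The last side is then maximal because $[0,1)\not\subset(0,1)$, $\gamma_1\le4$, and each scale contributes $2^{-n}$.
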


\begin{proof}
Take $A=(-1/10,11/10)$, $\Omega=A^n$, and $I=[0,1)$.
For every dyadic subinterval $J\subset I$, consider
\[
 R_J=I\times J\times I^{\,n-2}.
\]
The last side is maximal in $\Omega$, since the dyadic parent
$[0,2)$ of $I$ is not contained in $A$. Hence
$R_J\in\mathcal M_n(\Omega)$.

The numbers $\gamma_1(R_J)$ are bounded above independently of $J$.
To see this, let $M$ be the one-dimensional uncentered maximal
operator. For every $x$,
\[
 M_s\ind_{A^n}(x)\le M\ind_A(x_1).
\]
The set $\{M\ind_A>1/2\}$ is a bounded interval. It follows that the
first side of any enlarged rectangle defining $\gamma_1(R_J)$ has
uniformly bounded length. Since $|I|=1$, we have
$1\le\gamma_1(R_J)\le C$.

There are $2^k$ choices of $J$ of length $2^{-k}$, and $|R_J|=|J|$.
Consequently, for every $N$,
\[
 \sum_{k=0}^N
 \sum_{\substack{J\subset I\\|J|=2^{-k}}}
       \gamma_1(R_J)^{-\delta}|R_J|
 \ge C^{-\delta}\sum_{k=0}^N2^k2^{-k}
 =C^{-\delta}(N+1).
\]
This contradicts \eqref{eq:false}, since $|\Omega|$ is fixed.
\end{proof}

The second side can be subdivided at every scale without changing
maximality in the last coordinate or increasing $\gamma_1$. The
factor $\gamma_1^{-\delta}$ therefore does not control this sum.
This is the obstruction to using \eqref{eq:journe2} in the same form
when $n\ge3$.

\section{An estimate for dyadic rectangles}
\label{sec:packing}

For a measurable function $g$ and a set $E$ of positive measure,
$\essinf_Eg$ denotes the largest number $c$ such that $g\ge c$
almost everywhere on $E$.

\begin{lemma}\label{lem:one}
Let $s>1$ and $g\ge0$ belong to $L^1(\R)$. Then
\begin{equation}\label{eq:one}
 \sum_{I\in\D}|I|^s(\essinf_I g)^s
 \le A_s\norm g_1^s,
 \qquad A_s=\frac{s}{1-2^{1-s}}.
\end{equation}
\end{lemma}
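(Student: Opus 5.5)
We prove \eqref{eq:one} by a layer-cake argument over the levels of $g$, combined with the observation that the dyadic intervals on which $g$ exceeds a level $t$ sit inside the superlevel set $\{g>t\}$ (up to null sets), whose measure Chebyshev controls. For $I\in\D$ put $a_I=\essinf_I g$. If $a_I>t$, then $g>t$ almost everywhere on $I$, so $I\subset E_t:=\{g>t\}$ up to a null set and $|I|\le|E_t|\le\norm g_1/t$.

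Write
\[
 a_I^s=\int_0^\infty s\,t^{s-1}\ind_{\{a_I>t\}}\,dt .
\]
Interchanging sum and integral by Tonelli gives
\[
 \sum_{I}|I|^s a_I^s=\int_0^\infty s t^{s-1}\Bigl(\sum_{I:\,a_I>t}|I|^s\Bigr)dt .
\]
We now bound the inner sum for fixed $t$. Let $L=|E_t|$. For each scale $2^{-k}$, the intervals of length $2^{-k}$ with $a_I>t$ are disjoint and contained in $E_t$ a.e., so there are at most $L2^{k}$ of them. Such intervals exist only when $2^{-k}\le L$. Hence
\[
 \sum_{I:\,a_I>t}|I|^s\le\sum_{2^{-k}\le L}L2^k2^{-ks}=L\sum_{2^{-k}\le L}2^{-k(s-1)}\le \frac{L\cdot L^{s-1}}{1-2^{1-s}}=\frac{|E_t|^s}{1-2^{1-s}}.
\]
The geometric sum is dominated by its largest term $\le L^{s-1}$, and $s>1$ ensures convergence.

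It remains to show $\int_0^\infty s t^{s-1}|E_t|^s\,dt\le s\norm g_1^s$. Put $\lambda(t)=|E_t|$, which is nonincreasing. Then $t\lambda(t)\le\int_0^t\lambda=: \Lambda(t)\le\norm g_1$. Hence
\[
 t^{s-1}\lambda(t)^s=(t\lambda(t))^{s-1}\lambda(t)\le\Lambda(t)^{s-1}\Lambda'(t).
\]
Integrating gives
\[
 \int_0^\infty t^{s-1}\lambda^s\,dt\le\frac{\Lambda(\infty)^s}{s}=\frac{\norm g_1^s}{s}.
\]
This yields a constant $A_s/s$ or better, which is in particular at most $A_s$. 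If one prefers to match the stated $A_s$ exactly, the cruder bound $t^{s-1}\lambda^{s}\le \norm g_1^{s-1}\lambda(t)$ gives $\int_0^\infty t^{s-1}\lambda^s\,dt\le\norm g_1^{s-1}\int\lambda=\norm g_1^s$. Combined with the factor $s/(1-2^{1-s})$ this gives exactly $A_s\norm g_1^s$.

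The main point needing care is the counting step. It requires that an interval with $a_I>t$ lies in $E_t$ up to measure zero, which is immediate from the definition of essential infimum. It also uses the disjointness of same-length dyadic intervals. The measurability and Tonelli interchange are routine, since all terms are nonnegative.
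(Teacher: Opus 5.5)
Your proof is correct and has the same skeleton as the paper's: the layer-cake formula, the level-set bound $\sum_{a_I>t}|I|^s\le |E_t|^s/(1-2^{1-s})$, and then integration in $t$.

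\textbf{The level-set bound.} You prove it by a different mechanism. The paper groups the admissible intervals under the maximal dyadic intervals $\mathcal J_t$ contained in $E_t$, sums the descendants inside each one, and then uses $\sum_J|J|^s\le(\sum_J|J|)^s$. You instead count scale by scale: there are at most $|E_t|2^k$ admissible intervals of length $2^{-k}$, and none longer than $|E_t|$. Summing the geometric series from the largest admissible scale gives the same bound. This avoids the stopping-time construction, so you never need the existence of largest ancestors.

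\textbf{The integration in $t$.} The paper uses exactly your cruder variant $t^{s-1}|E_t|^s\le\norm g_1^{s-1}|E_t|$, which produces $A_s$. Your refinement uses $t\lambda(t)\le\Lambda(t)$ and $\int_0^\infty\Lambda^{s-1}\Lambda'\,dt=\norm g_1^s/s$. This step is legitimate, since $\lambda\in L^1(0,\infty)$ makes $\Lambda$ absolutely continuous with $\Lambda'=\lambda$ a.e. It removes the factor $s$ and gives the constant $1/(1-2^{1-s})$.

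That constant is sharp. For $g=\ind_{[0,1)}$ the left-hand side equals $\sum_{k\ge0}2^k2^{-ks}=1/(1-2^{1-s})$. Fed into the induction of Lemma \ref{lem:tensor}, it also yields the packing constant $(1-2^{1-s})^{-n}$, which Remark \ref{rem:endpoint} shows is attained at $[0,1)^n$.
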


\begin{proof}
Put $M=\norm g_1$. If $M=0$, then $g=0$ almost everywhere and the
claim follows. We therefore assume $M>0$.
For a fixed dyadic interval $J$, its $k$th generation consists of
$2^k$ intervals, each of length $2^{-k}|J|$. Hence
\begin{equation}\label{eq:descendants}
 \sum_{\substack{I\in\D\\I\subset J}}|I|^s
 =|J|^s\sum_{k=0}^\infty 2^k2^{-ks}
 =\frac{|J|^s}{1-2^{1-s}}.
\end{equation}

Fix $t>0$ and let $E_t=\{g>t\}$. Since
$t|E_t|\le M$, this set has finite measure. Consider the dyadic
intervals $I$ for which $|I\setminus E_t|=0$. Every such interval
has length at most $|E_t|$. Its dyadic ancestors have lengths tending
to infinity, so there is a largest ancestor that is still contained
in $E_t$ up to a null set. Let $\mathcal J_t$ be the family of all
maximal intervals obtained in this way.

Two dyadic intervals are either disjoint or one contains the other.
Distinct members of $\mathcal J_t$ are therefore disjoint, and every
interval under consideration lies in exactly one member of
$\mathcal J_t$. In particular,
\[
 \sum_{J\in\mathcal J_t}|J|
 =\left|\bigcup_{J\in\mathcal J_t}J\right|\le |E_t|.
\]
Using \eqref{eq:descendants} in each maximal interval gives
\begin{align*}
 \sum_{\substack{I\in\D\\|I\setminus E_t|=0}}|I|^s
 &=\sum_{J\in\mathcal J_t}
       \sum_{\substack{I\in\D\\I\subset J}}|I|^s
 =\frac1{1-2^{1-s}}\sum_{J\in\mathcal J_t}|J|^s\\
 &\le\frac1{1-2^{1-s}}
       \left(\sum_{J\in\mathcal J_t}|J|\right)^s
 \le\frac{|E_t|^s}{1-2^{1-s}}.
\end{align*}

Write $a_I=\essinf_I g$. For each $I$,
\[
 a_I^s=s\int_0^\infty t^{s-1}\ind_{\{t<a_I\}}\,dt.
\]
If $a_I>t$, then $g>t$ almost everywhere on $I$. Tonelli's theorem
and the preceding bound therefore give
\begin{align*}
 \sum_I|I|^sa_I^s
 &=s\int_0^\infty t^{s-1}\sum_{a_I>t}|I|^s\,dt\\
 &\le\frac{s}{1-2^{1-s}}
       \int_0^\infty t^{s-1}|E_t|^s\,dt\\
       &\le\frac{s}{1-2^{1-s}}
       \int_0^\infty M^{s-1}|E_t|\,dt\\
       &=A_s M^s,
\end{align*}
 which proves \eqref{eq:one}.
\end{proof}

\begin{lemma}\label{lem:tensor}
Let $s>1$ and $g\ge0$ belong to $L^1(\R^n)$. Then
\begin{equation}\label{eq:tensor}
 \sum_{R\in\D^n}|R|^s(\essinf_R g)^s
 \le A_s^n\norm g_1^s.
\end{equation}
In particular, for every measurable $\Omega$ of finite measure,
\begin{equation}\label{eq:packing}
 \sum_{\substack{R\in\D^n\\R\subset\Omega}}|R|^s
 \le A_s^n|\Omega|^s.
\end{equation}
\end{lemma}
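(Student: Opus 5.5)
We argue by induction on $n$, with Lemma \ref{lem:one} as the case $n=1$. Suppose \eqref{eq:tensor} holds in dimension $n-1$ with constant $A_s^{n-1}$. Write $x=(x_1,x')$ with $x'\in\R^{n-1}$, and write each $R\in\D^n$ as $R=I\times R'$ with $I\in\D$ and $R'\in\D^{n-1}$. Fix $R'$ and define the one-variable function
\[
 h_{R'}(x_1)=|R'|\,\essinf_{x'\in R'}g(x_1,x'),
\]
where the essential infimum is taken in $x'$ for a.e.\ fixed $x_1$. To avoid measurability issues one may take a Borel representative of $g$ and use the countable family of dyadic rectangles. The key pointwise fact is
\[
 \essinf_{I\times R'}g\le\essinf_{x_1\in I}\essinf_{x'\in R'}g(x_1,x').
\]
Indeed, if $g\ge c$ a.e.\ on $I\times R'$, then by Fubini $g(x_1,\cdot)\ge c$ a.e.\ on $R'$ for a.e.\ $x_1\in I$. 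Hence $|R|^s(\essinf_Rg)^s\le |I|^s(\essinf_I h_{R'})^s$.

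Summing over $I$ with $R'$ fixed, Lemma \ref{lem:one} gives
\[
 \sum_{I}|I\times R'|^s(\essinf_{I\times R'}g)^s\le A_s\norm{h_{R'}}_{L^1(\R)}^s .
\]
It remains to bound $\sum_{R'}\norm{h_{R'}}_1^s$ by $A_s^{n-1}\norm g_1^s$. Set $G(x')=\int_\R g(x_1,x')\,dx_1$, so that $\norm G_{L^1(\R^{n-1})}=\norm g_1$. Integrating the essential infimum in $x_1$ gives
\[
 \norm{h_{R'}}_1=|R'|\int\essinf_{R'}g(x_1,\cdot)\,dx_1\le|R'|\,\essinf_{R'}G,
\]
since for a.e.\ $x'\in R'$ we have $g(x_1,x')\ge\essinf_{R'}g(x_1,\cdot)$ for a.e.\ $x_1$, again by Fubini. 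The induction hypothesis applied to $G$ then yields $\sum_{R'}|R'|^s(\essinf_{R'}G)^s\le A_s^{n-1}\norm G_1^s$. Combining the two steps gives \eqref{eq:tensor} with constant $A_s^n$.

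For \eqref{eq:packing}, take $g=\ind_\Omega$. If $R\subset\Omega$, then $\essinf_Rg=1$, so the left side of \eqref{eq:packing} is dominated by that of \eqref{eq:tensor}, and $\norm g_1=|\Omega|$. The only delicate point is the order-of-infima inequality together with the Fubini and measurability bookkeeping for the essential infima; the rest is the iterated application of Lemma \ref{lem:one}.
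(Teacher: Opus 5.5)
Your argument is correct and is the paper's induction with the two factors interchanged. You apply Lemma~\ref{lem:one} to the slice infimum $h_{R'}(x_1)=|R'|\essinf_{x'\in R'}g(x_1,x')$ and the induction hypothesis to the marginal $G(x')=\int g(x_1,x')\,dx_1$; the paper instead applies the induction hypothesis to $h_J(x')=\essinf_{t\in J}g(x',t)$ and Lemma~\ref{lem:one} to $G(t)=\int g(x',t)\,dx'$, with the same two Fubini facts doing the work. You only gesture at the measurability of $h_{R'}$; it follows exactly as in the paper from $\{h_{R'}<q\}=\{x_1:\int_{R'}\ind_{\{g(x_1,x')<q/|R'|\}}\,dx'>0\}$ (up to a null set) and Tonelli's theorem.
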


\begin{proof}
The case $n=1$ is Lemma \ref{lem:one}. Assume the result holds in
$n-1$ dimensions, and write $x=(x',t)\in\R^{n-1}\times\R$.
For each $J\in\D$, define, for almost every $x'$,
\[
 h_J(x')=\essinf_{t\in J}g(x',t),
 \qquad G(t)=\int_{\R^{n-1}}g(x',t)\,dx'.
\]
The function $h_J$ is measurable. Indeed, for every real $q$,
\[
 \{x':h_J(x')<q\}
 =\left\{x':\int_J\ind_{\{g(x',t)<q\}}\,dt>0\right\}
\]
up to a null set, and the integral on the right is a measurable
function of $x'$ by Tonelli's theorem. Also,
\[
 0\le h_J(x')\le\frac1{|J|}\int_Jg(x',t)\,dt,
\]
so $h_J\in L^1(\R^{n-1})$.

For every real $c$, Fubini's theorem gives
\[
 g\ge c\text{ a.e. on }R'\times J
 \quad\Longleftrightarrow\quad
 h_J\ge c\text{ a.e. on }R'.
\]
Taking the largest such lower bound proves the first identity below.
For the second, the defining inequality
$h_J(x')\le g(x',t)$ holds for almost every $(x',t)$ with $t\in J$.
Fubini's theorem allows us to integrate in $x'$ for almost every
$t\in J$, giving $\norm{h_J}_1\le G(t)$. Consequently,
\begin{equation}\label{eq:slices}
 \essinf_{R'\times J}g=\essinf_{R'}h_J,
 \qquad \norm{h_J}_1\le\essinf_JG.
\end{equation}
Tonelli's theorem also gives $G\in L^1(\R)$ and
$\norm G_1=\norm g_1$.

Every rectangle in $\D^n$ has a unique form $R'\times J$.
We first sum over $R'$, use the induction hypothesis for $h_J$,
and then apply Lemma \ref{lem:one} to $G$. Since all terms are
nonnegative, Tonelli's theorem permits each rearrangement:
\begin{align*}
 \sum_{R\in\D^n}|R|^s(\essinf_R g)^s
 &=\sum_{J\in\D}|J|^s
       \sum_{R'\in\D^{n-1}}|R'|^s(\essinf_{R'}h_J)^s\\
 &\le A_s^{n-1}\sum_{J\in\D}|J|^s\norm{h_J}_1^s\\
 &\le A_s^{n-1}\sum_{J\in\D}|J|^s(\essinf_JG)^s\\
 &\le A_s^n\norm G_1^s
 =A_s^n\norm g_1^s.
\end{align*}
For $g=\ind_\Omega$, we derive  \eqref{eq:packing} directly from
\eqref{eq:tensor}.
\end{proof}

\section{Proof of the embedding}

For $f\in\Bp$ and $R\in\D^n$, write
\[
 c_R=\ip f{w_R},
 \qquad d_R=|R|^{-1/2}c_R.
\]
The pairing is computed from the finitely many frequency blocks
meeting the Fourier support of $w_R$, as in the proof below.

\begin{lemma}\label{lem:coeff}
If $f\in\Bp$ and $1\le p<\infty$, then
\begin{equation}\label{eq:coeff}
 \sum_{R\in\D^n}|d_R|^p\lesssim_{n,p}\norm f_{\Bp}^p.
\end{equation}
\end{lemma}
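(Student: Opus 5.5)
The plan is to group the wavelets by frequency scale. For each fixed $\kk$, only boundedly many Littlewood--Paley blocks $\Delta_\jj f$ contribute to $\ip f{w_{\kk,\mm}}$, and for each such block the coefficients in $\mm$ are controlled by the $L^p$ norm of the block. First I would record the frequency localization. Since $\supp\widehat w\subset\{a\le|\xi|\le b\}$ and $\supp\widehat f\subset\overline{\Qp}$, the Fourier transform of $w_{\kk,\mm}$ meets that of $f$ only where $a2^{k_\ell}\le\xi_\ell\le b2^{k_\ell}$ for each $\ell$. Because $\sum_j\psi(2^{-j}\cdot)=1$ on $(0,\infty)$ and $\supp\psi\subset[1/2,2]$, there is a finite set $F\subset\Z^n$, of cardinality depending only on $a,b,n$, such that
\[
 c_{\kk,\mm}=\sum_{\jj\in\kk+F}\ip{\Delta_\jj f}{w_{\kk,\mm}}.
\]
This identity is the precise meaning of the pairing announced before the lemma.

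Next, for fixed $\kk$ and $\jj\in\kk+F$, I would bound the $\ell^p$ norm in $\mm$ of $|R_{\kk,\mm}|^{-1/2}\ip{\Delta_\jj f}{w_{\kk,\mm}}$. Write
\[
 |R|^{-1/2}\ip{g}{w_{\kk,\mm}}=\int g(x)\,2^{\sigma(\kk)}W(2^\kk x-\mm)\,dx .
\]
This is $g*\widetilde W_\kk$ evaluated at the point $2^{-\kk}\mm$, where $\widetilde W_\kk(x)=2^{\sigma(\kk)}W(-2^\kk x)$ is an $L^1$-normalized dilate. Since $W$ is a Schwartz function, the Schwartz decay together with H\"older's inequality gives
\[
 |g*\widetilde W_\kk(2^{-\kk}\mm)|^p\lesssim\int|g(x)|^p\,2^{\sigma(\kk)}\prod_\ell(1+|2^{k_\ell}x_\ell-m_\ell|)^{-2}\,dx .
\]
Summing over $\mm$, the weights sum to a bounded function. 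Hence
\[
 \sum_\mm|R_{\kk,\mm}|^{-p/2}|\ip{g}{w_{\kk,\mm}}|^p\lesssim 2^{\sigma(\kk)}\norm g_p^p .
\]

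Then I would combine the two steps. Apply the bound with $g=\Delta_\jj f$ and use $|\sum_{F}x|^p\le|F|^{p-1}\sum|x|^p$ to get
\[
 \sum_{R}|d_R|^p\lesssim\sum_\kk\sum_{\jj\in\kk+F}2^{\sigma(\kk)}\norm{\Delta_\jj f}_p^p .
\]
Since $\sigma(\kk)-\sigma(\jj)$ is bounded for $\jj\in\kk+F$, and each $\jj$ arises from at most $|F|$ values of $\kk$, the right-hand side is $\lesssim\norm f_{\Bp}^p$ by \eqref{eq:besov}.

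The main obstacle is the first step: making the pairing rigorous for a general $f\in\Bp$. Such an $f$ is only a tempered distribution modulo the homogeneous identification, so the identity for $c_{\kk,\mm}$ has to be checked. I would first prove the identity for $f$ with $\widehat f$ compactly supported away from the coordinate hyperplanes, where everything is Schwartz. I would then define $c_R$ for general $f$ by the finite sum, which converges because each $\Delta_\jj f\in L^p$ and $w_R\in L^{p'}$. The estimate is uniform, so it passes to the limit. The second step is standard, needing only care that the implicit constants depend only on $n$, $p$ and $w$.
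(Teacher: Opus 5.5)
Your proposal is correct and is essentially the paper's argument: localize $c_{\kk,\mm}$ to the boundedly many blocks with $|j_\ell-k_\ell|\le L$, apply H\"older against the $L^1$-normalized dilate $2^{\sigma(\kk)}|W(2^\kk x-\mm)|$, and sum in $\mm$ using a uniform bound on $\sum_\mm|W(y-\mm)|$ (your product weights play the role of the paper's $P_W$), then reindex the sum over $(\kk,\jj)$. The only cosmetic difference is that you apply the convexity inequality to the coefficients block by block, whereas the paper first forms $u_\kk=\sum_{\jj\in\mathcal N(\kk)}\Delta_\jj f$ and applies it pointwise to $|u_\kk|^p$.
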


\begin{proof}
The Fourier support of $w_{\kk,\mm}$ is contained in
\[
 a2^{k_\ell}\le|\xi_\ell|\le b2^{k_\ell},
 \qquad 1\le\ell\le n.
\]
Since $f$ has positive frequencies, only the part with
$\xi_\ell>0$ can contribute. On the support of $\Psi_\jj$, we have
$2^{j_\ell-1}\le\xi_\ell\le2^{j_\ell+1}$. If these two supports
meet, then
\[
 2^{j_\ell-1}\le b2^{k_\ell},
 \qquad a2^{k_\ell}\le2^{j_\ell+1},
\]
and hence
$\log_2a-1\le j_\ell-k_\ell\le\log_2b+1$.
We may therefore choose a fixed integer $L\ge0$ such that every
block that can contribute satisfies $|j_\ell-k_\ell|\le L$ in
each coordinate. 

Let
\[
 \mathcal N(\kk)=\{\jj:|j_\ell-k_\ell|\le L\text{ for all }\ell\},
 \qquad M=(2L+1)^n,
 \qquad u_\kk=\sum_{\jj\in\mathcal N(\kk)}\Delta_\jj f.
\]

It follows that
$c_{\kk,\mm}=\ip{u_\kk}{w_{\kk,\mm}}$. With the normalization of
$d_{\kk,\mm}$, it becomes
\[
 d_{\kk,\mm}
 =2^{\sigma(\kk)}
   \int_{\R^n}u_\kk(x)\overline{W(2^\kk x-\mm)}\,dx.
\]

Set $A=\norm W_1$. The measure
\[
 d\mu_{\kk,\mm}(x)
 =2^{\sigma(\kk)}|W(2^\kk x-\mm)|\,dx
\]
has total mass $A$.
For $p>1$, the triangle inequality followed by H\"older's inequality
on this measure gives
\[
 |d_{\kk,\mm}|
 \le\int|u_\kk|\,d\mu_{\kk,\mm}
 \le\left(\int|u_\kk|^p\,d\mu_{\kk,\mm}\right)^{1/p}
      A^{1-1/p}.
\]
Thus
\begin{equation}\label{eq:weighted}
 |d_{\kk,\mm}|^p
 \le A^{p-1}2^{\sigma(\kk)}
      \int |u_\kk(x)|^p|W(2^\kk x-\mm)|\,dx.
\end{equation}
For $p=1$, the same inequality is apparent.

We next show that
\[
 P_W:=\sup_{y\in\R^n}
          \sum_{\mm\in\Z^n}|W(y-\mm)|<\infty.
\]
The sum is unchanged when $y$ is translated by an integer vector,
so it is enough to take $y\in[0,1]^n$.
Choose $B>n$. Schwartz decay and
$1+|\mm|\le(1+\sqrt n)(1+|y-\mm|)$ give
\[
 |W(y-\mm)|\le C_B(1+|y-\mm|)^{-B}
 \le C_{B,n}(1+|\mm|)^{-B}
\]
uniformly on this cube. There are at most $C_n2^{rn}$ lattice
points with $|\mm|<2^{r+1}$, so
\[
 \sum_{\mm\in\Z^n}(1+|\mm|)^{-B}
 \le C_n\left(1+\sum_{r=0}^\infty2^{rn}2^{-rB}\right)<\infty.
\]
Summing \eqref{eq:weighted} over $\mm$ and applying Tonelli's
theorem therefore yields
\[
 \sum_\mm|d_{\kk,\mm}|^p
 \le A^{p-1}P_W2^{\sigma(\kk)}\norm{u_\kk}_p^p.
\]

H\"older's inequality for a sum of $M$ numbers gives
\[
 \left|\sum_{\jj\in\mathcal N(\kk)}\Delta_\jj f(x)\right|^p
 \le M^{p-1}\sum_{\jj\in\mathcal N(\kk)}
                  |\Delta_\jj f(x)|^p.
\]
After integration and summation over $\kk$, we obtain
\[
 \sum_{\kk,\mm}|d_{\kk,\mm}|^p
 \le A^{p-1}P_WM^{p-1}
       \sum_\kk\sum_{\jj\in\mathcal N(\kk)}
       2^{\sigma(\kk)}\norm{\Delta_\jj f}_p^p.
\]
For a fixed $\jj$, the condition $\jj\in\mathcal N(\kk)$ allows
$2L+1$ choices of $k_\ell$ in each coordinate, hence exactly $M$
choices of $\kk$. For each one,
$\sigma(\kk)-\sigma(\jj)\le nL$. Reversing the order of the
nonnegative sums now gives
\begin{align*}
 \sum_\kk\sum_{\jj\in\mathcal N(\kk)}
       2^{\sigma(\kk)}\norm{\Delta_\jj f}_p^p
 &=\sum_\jj\norm{\Delta_\jj f}_p^p
       \sum_{\kk:\,\jj\in\mathcal N(\kk)}2^{\sigma(\kk)}\\
 &\le M2^{nL}\sum_\jj2^{\sigma(\jj)}
                        \norm{\Delta_\jj f}_p^p.
\end{align*}
Combining the last two estimates proves
\[
 \sum_{\kk,\mm}|d_{\kk,\mm}|^p
 \le A^{p-1}P_WM^p2^{nL}\norm f_{\Bp}^p,
\]
which gives \eqref{eq:coeff}.
\end{proof}

\begin{proof}[Proof of Theorem \ref{thm:embedding}]
Let $f\in\Bp$, and fix an open set $\Omega$ with
$0<|\Omega|<\infty$. Suppose $p>2$ and set $s=p/(p-2)>1$.
H\"older's inequality with exponents $p/2$ and $s$, followed by
Lemmas \ref{lem:tensor} and \ref{lem:coeff}, gives
\begin{align}\label{eq:carlesonbound}
 \sum_{R\subset\Omega}|c_R|^2
 &=\sum_{R\subset\Omega}|d_R|^2|R|\notag\\
 &\le
 \left(\sum_{R\subset\Omega}|d_R|^p\right)^{2/p}
 \left(\sum_{R\subset\Omega}|R|^s\right)^{1/s}\notag\\
 &\le A_s^{n/s}|\Omega|\,
       \norm{(d_R)}_{\ell^p}^2
 \lesssim_{n,p}|\Omega|\norm f_{\Bp}^2.
\end{align}
For $1\le p\le2$, use $|R|\le|\Omega|$ and
$\ell^p\subset\ell^2$ to obtain
\[
 \frac1{|\Omega|}\sum_{R\subset\Omega}|c_R|^2
 \le\sum_R|d_R|^2
 \le\norm{(d_R)}_{\ell^p}^2
 \lesssim_{n,p}\norm f_{\Bp}^2.
\]
By the wavelet characterization \eqref{eq:carleson}, $f$ has a
representative in $P_+\BMO$, with
\[
 \norm f_{\BMO}\lesssim_{n,p}\norm f_{\Bp}.
\]
\end{proof}

\begin{remark}\label{rem:endpoint}
For the unit dyadic cube $Q=[0,1)^n$,
\[
 \sum_{\substack{R\in\D^n\\R\subset Q}}|R|^s
 =\prod_{\ell=1}^n\sum_{k_\ell=0}^\infty
       2^{-(s-1)k_\ell}
 =(1-2^{1-s})^{-n}.
\]
Thus the packing constant diverges as $s\downarrow1$, and
\eqref{eq:carlesonbound} gives no estimate at $p=\infty$.
\end{remark}

\section{Fej\'er approximation on the polydisc}\label{sec:fejer}

Let $\mathbb D=\{z\in\C:|z|<1\}$ and
$\mathbb T=\partial\mathbb D$. We work on $H^2(\mathbb D^n)$,
identified with its boundary functions on $\mathbb T^n$.
In angular coordinates $t=(t_1,\ldots,t_n)$, normalized Haar measure
is $dm(t)=(2\pi)^{-n}\,dt_1\cdots dt_n$. We use the product Hardy space $\Hprod(\mathbb T^n)$
and its dual $\BMO(\mathbb T^n)$, including constants and functions
of fewer than $n$ variables. The BMO norm in this section is the
dual norm.

Identify $H^2(\mathbb D^n)$ with $\ell^2(\mathbb N_0^n)$ through the
orthonormal basis $e_\alpha(z)=z^\alpha$, where
$\mathbb N_0=\{0,1,2,\ldots\}$,
$\alpha=(\alpha_1,\ldots,\alpha_n)$, and
$z^\alpha=\prod_{\ell=1}^nz_\ell^{\alpha_\ell}$.
The vector $e_0$ is the constant function one.
We use the linear Hankel matrix convention
\begin{equation}\label{eq:diskmatrix}
 (H_\varphi)_{\alpha,\beta}=a_{\alpha+\beta},
 \qquad
 \varphi(z)=\sum_{\gamma\in\mathbb N_0^n}a_\gamma z^\gamma.
\end{equation}
If $H_\varphi$ is bounded, then
$(a_\alpha)_\alpha=H_\varphi e_0\in\ell^2$, so its analytic
symbol belongs to $H^2(\mathbb D^n)$.

Write
\[
 \mathrm{BMOA}(\mathbb D^n)
 =H^2(\mathbb D^n)\cap\BMO(\mathbb T^n),
\]
and let $\mathrm{VMOA}(\mathbb D^n)$ be the closure of the analytic
polynomials in this BMO norm.

For $N\ge0$, set
\[
 F_N(t)=\sum_{|k|\le N}
       \left(1-\frac{|k|}{N+1}\right)e^{ikt},
 \qquad K_N(t)=\prod_{\ell=1}^nF_N(t_\ell).
\]
The kernels $K_N$ form a nonnegative approximate identity with integral one.
Write
\[
 \varphi_N=\varphi*K_N,\qquad
 w_N(\gamma)=\prod_{\ell=1}^n
          \left(1-\frac{\gamma_\ell}{N+1}\right)_+.
\]
Here $r_+=\max\{r,0\}$ and convolution is taken on $\mathbb T^n$:
$\varphi_N(e^{it})=\int_{\mathbb T^n}\varphi(e^{i(t-s)})K_N(s)\,dm(s)$.
Thus $\widehat{\varphi_N}(\gamma)=w_N(\gamma)a_\gamma$ for
$\gamma\in\mathbb N_0^n$, and
\[
 (H_{\varphi_N})_{\alpha,\beta}
       =w_N(\alpha+\beta)a_{\alpha+\beta}.
\]
This matrix vanishes if any coordinate of its row or column index
exceeds $N$. Hence
\[
 \operatorname{rank}H_{\varphi_N}\le(N+1)^n,
 \qquad H_{\varphi_N}\in S^p\quad(1\le p<\infty).
\]
\begin{proposition}\label{prop:fejer}
Let $H_\varphi$ be bounded on $H^2(\mathbb D^n)$. Then
\begin{equation}\label{eq:fejerbound}
 \norm{H_{\varphi_N}}_{\mathrm{op}}
          \le\norm{H_\varphi}_{\mathrm{op}}.
\end{equation}
The operators $H_{\varphi_N}$ converge to $H_\varphi$ in the weak
operator topology.
\end{proposition}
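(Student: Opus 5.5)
The plan is to write $H_{\varphi_N}$ as an average of unitary conjugates of $H_\varphi$, weighted by the Fej\'er kernel. For $s\in\R^n$, let $U_s$ be the diagonal unitary on $H^2(\mathbb D^n)$ given by $U_se_\alpha=e^{-i\alpha\cdot s}e_\alpha$, that is, $(U_su)(z)=u(e^{-is}z)$. In matrix form,
\[
 (U_sH_\varphi U_s)_{\alpha,\beta}=e^{-i(\alpha+\beta)\cdot s}a_{\alpha+\beta}.
\]
Because the matrix entries depend only on $\alpha+\beta$, the operator $U_sH_\varphi U_s$ is the Hankel operator with rotated symbol $\varphi(e^{-is}\cdot)$. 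Averaging against $K_N(s)\,dm(s)$ multiplies the entry at $\gamma=\alpha+\beta$ by
\[
 \int K_N(s)e^{-i\gamma\cdot s}\,dm(s)=\widehat{K_N}(\gamma)=w_N(\gamma),
\]
since $K_N$ is a product of Fej\'er kernels.

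The key identity is therefore
\[
 H_{\varphi_N}=\int_{\mathbb T^n}K_N(s)\,U_sH_\varphi U_s\,dm(s).
\]
I would interpret the integral in the weak sense: for finitely supported $u,v$, the pairing $\ip{H_{\varphi_N}u}{v}$ equals $\int K_N(s)\ip{H_\varphi U_su}{U_s^*v}\,dm(s)$. This is checked entrywise on basis vectors from the computation above, and it extends by density once the bound is established. Since $K_N\ge0$ and $\int K_N\,dm=1$,
\[
 |\ip{H_{\varphi_N}u}{v}|\le\int K_N(s)\norm{H_\varphi}_{\mathrm{op}}\norm u\norm v\,dm(s)=\norm{H_\varphi}_{\mathrm{op}}\norm u\norm v,
\]
which proves \eqref{eq:fejerbound}. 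The only point requiring care is the measurability and continuity of $s\mapsto\ip{H_\varphi U_su}{U_s^*v}$. For finitely supported $u,v$ this is a trigonometric polynomial in $s$, so there is no difficulty.

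For weak operator convergence, I would fix $u,v\in\ell^2$. On finitely supported vectors, $\ip{H_{\varphi_N}e_\beta}{e_\alpha}=w_N(\alpha+\beta)a_{\alpha+\beta}$, which converges to $a_{\alpha+\beta}$ as $N\to\infty$ because $w_N(\gamma)\to1$ for each fixed $\gamma$. Hence $\ip{H_{\varphi_N}u}{v}\to\ip{H_\varphi u}{v}$ for finitely supported $u,v$. The uniform bound \eqref{eq:fejerbound} and an $\varepsilon/3$ density argument then extend this to all $u,v\in H^2(\mathbb D^n)$.

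The main point to verify is the averaging identity, which rests on the conjugation computation $U_sH_\varphi U_s$. Once that is in place, both the norm bound and the convergence are routine.
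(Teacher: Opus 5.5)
Your proposal is correct and follows essentially the same route as the paper. The paper also writes $H_{\varphi_N}=\int_{\mathbb T^n}K_N(t)\,U_tH_\varphi U_t\,dm(t)$ with diagonal rotations $U_t$, deduces \eqref{eq:fejerbound} from $K_N\ge0$ and $\int K_N\,dm=1$, and obtains weak operator convergence from entrywise convergence together with density and the uniform bound. The differences are cosmetic: the paper uses $U_te_\alpha=e^{i\alpha\cdot t}e_\alpha$ with the evenness of $K_N$ (your sign choice makes evenness unnecessary), and it integrates on vectors rather than in your weak, bilinear-form sense.
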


\begin{proof}
Let $U_te_\alpha=e^{i\alpha\cdot t}e_\alpha$. Direct computation of
matrix entries, using that $K_N$ is even, gives
\begin{equation}\label{eq:rotationaverage}
 H_{\varphi_N}
 =\int_{\mathbb T^n}K_N(t)\,U_tH_\varphi U_t\,dm(t).
\end{equation}
The integral is taken on each vector. Since $U_t$ is unitary and
$K_N$ is nonnegative with integral one,
\[
 \norm{H_{\varphi_N}f}_2
 \le\int_{\mathbb T^n}K_N(t)\norm{U_tH_\varphi U_tf}_2\,dm(t)
 \le\norm{H_\varphi}_{\mathrm{op}}\norm f_2.
\]
This proves \eqref{eq:fejerbound}.

For every pair of monomials, the matrix formula gives
\[
 \ip{H_{\varphi_N}e_\beta}{e_\alpha}
 \longrightarrow a_{\alpha+\beta}
 =\ip{H_\varphi e_\beta}{e_\alpha}.
\]
By linearity, this holds for analytic polynomials $f,g$. Their density
in $H^2$ and the uniform bound \eqref{eq:fejerbound} then give
\[
 \ip{H_{\varphi_N}f}g\longrightarrow\ip{H_\varphi f}g,
 \qquad f,g\in H^2.
\]
This proves weak operator convergence.
\end{proof}

\begin{question}\label{question:polydisc}
Let $n\ge2$ and $\varphi\in H^2(\mathbb D^n)$. Are the following
equivalences valid for small Hankel operators on $H^2(\mathbb D^n)$?
\[
 \begin{aligned}
 H_\varphi\text{ is bounded}
 &\quad\Longleftrightarrow\quad
       \varphi\in\mathrm{BMOA}(\mathbb D^n),\\
 H_\varphi\text{ is compact}
 &\quad\Longleftrightarrow\quad
       \varphi\in\mathrm{VMOA}(\mathbb D^n).
 \end{aligned}
\]
\end{question}

The BMOA condition is sufficient for boundedness by Hardy--BMO
duality. If the symbol is in VMOA, approximation by analytic
polynomials gives compactness. The question is whether these
conditions are necessary.

Each $H_{\varphi_N}$ has finite rank, so it belongs to every
$S^p$, $1\le p<\infty$. We can apply Corollary \ref{cor:hankel}
in the polydisc model as well. Indeed, the coordinatewise Cayley
map $\kappa(x)=(x-i)/(x+i)$, with the usual Hardy-space unitaries,
carries Hankel forms between the two models and preserves their
Schatten norms. Bounded symbols pass between the models by
composition and multiplication by a factor of modulus one.
By analytic Hardy--BMO duality and the Hahn--Banach theorem, a
product BMO symbol has a bounded symbol with the same analytic
projection. These facts transfer the estimate in
Corollary \ref{cor:hankel} to the polydisc and give
\[
 \norm{\varphi_N}_{\BMO(\mathbb T^n)}
 \le C_{n,p}\norm{H_{\varphi_N}}_{S^p}<\infty
 \qquad\text{for every }N.
\]
Also, each $\varphi_N$ is an analytic polynomial, so it belongs to
$\mathrm{VMOA}(\mathbb D^n)$.

The estimate from Corollary \ref{cor:hankel} involves the Schatten
norm, whereas Proposition \ref{prop:fejer} controls only the
operator norm. Thus membership of each $\varphi_N$ in BMOA does
not provide a bound for
$\sup_N\norm{\varphi_N}_{\BMO(\mathbb T^n)}$.
Such a bound is what the following limit argument needs.

Moreover, if
\begin{equation}\label{eq:uniformbmo}
 M:=\sup_N\norm{\varphi_N}_{\BMO(\mathbb T^n)}<\infty,
\end{equation}
then $\varphi\in\BMO(\mathbb T^n)$ and
$\norm\varphi_{\BMO(\mathbb T^n)}\le M$.
To prove this, first note that $0\le w_N(\gamma)\le1$ and
$w_N(\gamma)\to1$. Since $\varphi\in H^2$, dominated convergence gives
\[
 \norm{\varphi_N-\varphi}_2^2
 =\sum_{\gamma\in\mathbb N_0^n}
      |w_N(\gamma)-1|^2|a_\gamma|^2\longrightarrow0.
\]
For every trigonometric polynomial $g$, it follows that
\[
 \left|\int_{\mathbb T^n}\varphi\overline g\,dm\right|
 =\lim_N\left|\int_{\mathbb T^n}\varphi_N\overline g\,dm\right|
 \le M\norm g_{\Hprod(\mathbb T^n)}.
\]
By density of trigonometric polynomials in $\Hprod(\mathbb T^n)$,
this pairing extends to a bounded functional of norm at most $M$.
Hardy--BMO duality gives the assertion.

Now suppose that $H_\varphi$ is compact. The dual of the space of
compact operators is the trace class, under the trace pairing.
Let $T$ be trace class, and choose finite-rank $T_k$ with
$\norm{T-T_k}_{S^1}\to0$. We have
\begin{align*}
 &\left|\operatorname{Tr}\bigl(T^*(H_{\varphi_N}-H_\varphi)\bigr)\right|\\
 &\quad\le
 2\norm{H_\varphi}_{\mathrm{op}}\norm{T-T_k}_{S^1}
 +\left|\operatorname{Tr}\bigl(T_k^*(H_{\varphi_N}-H_\varphi)\bigr)\right|.
\end{align*}
For fixed $k$, the last term tends to zero by weak operator
convergence. Letting first $N\to\infty$ and then $k\to\infty$ shows
that $H_{\varphi_N}$ converges weakly to $H_\varphi$ in the space
of compact operators. Mazur's lemma gives finite convex combinations
that converge in operator norm. Thus one can choose
\[
 q_j=\sum_{N=j}^{m_j}\lambda_{j,N}\varphi_N,
 \qquad \lambda_{j,N}\ge0,\qquad
 \sum_{N=j}^{m_j}\lambda_{j,N}=1,
\]
with $\norm{H_{q_j}-H_\varphi}_{\mathrm{op}}\to0$.

Every $q_j$ is a polynomial and hence belongs to VMOA. We also have
\[
 \norm{q_j-\varphi}_2
 =\norm{(H_{q_j}-H_\varphi)e_0}_2
 \le\norm{H_{q_j}-H_\varphi}_{\mathrm{op}}\longrightarrow0.
\]
This does not yet give convergence in BMO norm. Even a uniform BMO
bound gives only the BMO conclusion proved above. To obtain VMOA
from these polynomial approximants, it would suffice to prove that
$(q_j)$ is Cauchy in BMO norm. Its BMO limit would then agree with
its $H^2$ limit $\varphi$, by testing against trigonometric
polynomials. The definition of VMOA would give the result.

Both limit arguments would follow from a bound
\begin{equation}\label{eq:polynomialbound}
 \norm P_{\BMO(\mathbb T^n)}\le C_n\norm{H_P}_{\mathrm{op}}
\end{equation}
for analytic polynomials, with $C_n$ independent of the degree.
For bounded $H_\varphi$, apply this bound to $\varphi_N$ and use
\eqref{eq:fejerbound} to obtain \eqref{eq:uniformbmo}.
For compact $H_\varphi$, apply it to the polynomial $q_j-q_k$:
\[
 \norm{q_j-q_k}_{\BMO(\mathbb T^n)}
 \le C_n\norm{H_{q_j}-H_{q_k}}_{\mathrm{op}}
 \longrightarrow0\qquad(j,k\to\infty).
\]
This would give $\varphi\in\mathrm{VMOA}(\mathbb D^n)$.
Thus the uniform symbol estimate \eqref{eq:polynomialbound} is the
step still needed in this approach.


\begin{thebibliography}{99}

\bibitem{Nehari}
Z. Nehari,
\emph{\href{https://doi.org/10.2307/1969670}{On bounded bilinear forms}},
Ann. of Math. (2) \textbf{65} (1957), 153--162.

\bibitem{Hartman}
P. Hartman,
\emph{On completely continuous Hankel matrices},
Proc. Amer. Math. Soc. \textbf{9} (1958), 862--866.
\href{https://doi.org/10.1090/S0002-9939-1958-0108684-8}
{doi:10.1090/S0002-9939-1958-0108684-8}.

\bibitem{Peller1}
V. V. Peller,
\emph{Hankel operators of class $S_p$ and their applications
(rational approximation, Gaussian processes, the problem of
majorizing operators)},
Math. USSR-Sb. \textbf{41} (1982), 443--479.
\href{https://www.mathnet.ru/eng/sm2817}{Publisher's record}.

\bibitem{Peller2}
V. V. Peller,
\emph{A description of Hankel operators of class $S_p$ for $p>0$,
an investigation of the rate of rational approximation, and other
applications},
Math. USSR-Sb. \textbf{50} (1985), 465--494.
\href{https://www.mathnet.ru/eng/sm2310}{Publisher's record}.

\bibitem{FL}
S. H. Ferguson and M. T. Lacey,
\emph{A characterization of product BMO by commutators},
Acta Math. \textbf{189} (2002), 143--160.
\href{https://arxiv.org/abs/math/0104144}{arXiv:math/0104144}.

\bibitem{LT}
M. T. Lacey and E. Terwilleger,
\emph{Hankel operators in several complex variables and product BMO},
Houston J. Math. \textbf{35} (2009), 159--183.
\href{https://arxiv.org/abs/math/0310348}{arXiv:math/0310348}.

\bibitem{LTW}
M. T. Lacey, E. Terwilleger, and B. D. Wick,
\emph{Remarks on product VMO},
Proc. Amer. Math. Soc. \textbf{134} (2006), 465--474.
\href{https://arxiv.org/abs/math/0405097}{arXiv:math/0405097}.

\bibitem{HTV}
I. Holmes, S. Treil, and A. Volberg,
\emph{Dyadic bi-parameter repeated commutator and dyadic product BMO},
\href{https://arxiv.org/abs/2101.00763}{arXiv:2101.00763}, 2021.

\bibitem{PS}
S. Pott and M. P. Smith,
\emph{Paraproducts and Hankel operators of Schatten class via
$p$-John--Nirenberg theorem},
J. Funct. Anal. \textbf{217} (2004), 38--78.
\href{https://doi.org/10.1016/j.jfa.2004.03.005}
{doi:10.1016/j.jfa.2004.03.005}.

\bibitem{BPBesov}
K. Bampouras and K.-M. Perfekt,
\emph{Besov spaces and Schatten class Hankel operators for Hardy and
Paley--Wiener spaces in higher dimensions},
\href{https://arxiv.org/abs/2409.04184v6}{arXiv:2409.04184v6}.

\bibitem{BPRecover}
K. Bampouras and K.-M. Perfekt,
\emph{Recovering product BMO from Schatten Hankel operators},
\href{https://arxiv.org/abs/2604.26722v2}{arXiv:2604.26722v2}.

\bibitem{DP}
L. Dalenc and S. Petermichl,
\emph{A lower bound criterion for iterated commutators},
J. Funct. Anal. \textbf{266} (2014), 5300--5320.
\href{https://arxiv.org/pdf/1307.6423}{arXiv:1307.6423}.

\end{thebibliography}
\end{document}